\documentclass[12pt, reqno]{amsart}
\usepackage{amsmath, amsthm, amscd, amsfonts, amssymb, graphicx, color}
\usepackage[bookmarksnumbered, colorlinks, plainpages]{hyperref}
\hypersetup{colorlinks=true,linkcolor=red, anchorcolor=green, citecolor=cyan, urlcolor=red, filecolor=magenta, pdftoolbar=true}

\textheight 22.5truecm \textwidth 14.5truecm
\setlength{\oddsidemargin}{0.35in}\setlength{\evensidemargin}{0.35in}

\setlength{\topmargin}{-.5cm}

\newtheorem{theorem}{Theorem}[section]
\newtheorem{lemma}[theorem]{Lemma}

\theoremstyle{definition}

\theoremstyle{remark}

\numberwithin{equation}{section}

\begin{document}
\setcounter{page}{1}

\begin{center}
{\Large \textbf{Some approximation results on
Bernstein-Schurer operators defined by $(p,q)$-integers~~(Revised)
}}

\bigskip

\textbf{M. Mursaleen}, \textbf{Md. Nasiruzzaman} and \textbf{Ashirbayev Nurgali}

Department of\ Mathematics, Aligarh Muslim University, Aligarh--202002, India%
\\[0pt]

mursaleenm@gmail.com; nasir3489@gmail.com \\[0pt%
]

\bigskip

\bigskip

\textbf{Abstract}
\end{center}

\parindent=8mm {\footnotesize {In the present article, we have given a corrigendum to our paper �Some approximation results on
Bernstein-Schurer operators defined by $(p,q)$-integers� published in Journal of Inequalities and Applications (2015) 2015:249.}}

\bigskip

{\footnotesize \emph{Keywords and phrases}: $q$-integers; $(p,q)$-integers;
Bernstein operator; $(p,q)$-Bernstein operator; $q$-Bernstein-Schurer
operator; $(p,q)$-Bernstein-Schurer operator; Modulus of continuity.}\\

\parindent=0mm\emph{AMS Subject Classification (2010):} 41A10, 41A25, 41A36, 40A30.


\section{ \textbf{{Introduction and Preliminaries } }}

In 1912, S.N Bernstein \cite{berns} introduced the following sequence of
operators $B_n:C[0,1] \to C[0,1]$ defined for any $n \in \mathbb{N}
$ and for any $f \in C[0,1]$ such as
\begin{equation*}
B_n(f;x)=\sum_{k=0}^n \binom{n}{k}x^k(1-x)^{n-%
k} f\left(\frac{k}{n}\right),~~~~~~~x \in [0,1].~~~~~~~~~~~~~~~~~\eqno(1.1)
\end{equation*}

In 1987, Lupa� \cite{lupas2} introduced the $q$-Bernstein operators by
applying the idea of $q$-integers, and in 1997 another generalization of
these operators introduced by Philip \cite{philip}. Later on, many authors
introduced $q$-generalization of various operators and investigated several
approximation properties. For instance, $q$-analogue of Stancu-Beta
operators in \cite{aral2} and \cite{mur1}; $q$-analogue of
Bernstein-Kantorovich operators in \cite{radu}; $q$- Baskakov-Kantorovich
operators in \cite{gupta}; $q$-Sz$\acute{a}$sz-Mirakjan operators in \cite%
{kant}; $q$-Bleimann, Butzer and Hahn operators in \cite{aral1} and in \cite%
{ersan}; $q$-analogue of Baskakov and Baskakov-Kantorovich operators in \cite%
{mah1}; $q$-analogue of Sz$\acute{a}$sz-Kantorovich operators in \cite{mah2}%
; and $q$-analogue of generalized Bernstein-Shurer operators in \cite{mur3}.

We recall certain notations on $(p,q)$-calculus.

The $(p,q)$-integer was introduced in order to generalize or unify several
forms of $q$-oscillator algebras well known in the earlier physics
literature related to the representation theory of single parameter quantum
algebras \cite{chak}. The $(p,q)$-integer $[n]_{p,q}$ is defined by
\begin{equation*}
\lbrack n]_{p,q}=\frac{p^{n}-q^{n}}{p-q},~~~~~~~n=0,1,2,\cdots ,~~0<q<p\leq
1.
\end{equation*}%
The $(p,q)$-Binomial expansion is
\begin{equation*}
(ax+by)_{p,q}^{n}:=\sum\limits_{k=0}^{n}p^{\frac{(n-k)(n-k-1)}{2}}q^{\frac{k(k-1)}{2}}
\left[
\begin{array}{c}
n \\
k%
\end{array}%
\right] _{p,q}a^{n-k}b^{k}x^{n-k}y^{k}
\end{equation*}

\begin{equation*}
(x+y)_{p,q}^{n}:=(x+y)(px+qy)(p^{2}x+q^{2}y)\cdots (p^{n-1}x+q^{n-1}y).
\end{equation*}%
Also, the $(p,q)$-binomial coefficients are defined by

\begin{equation*}
\left[
\begin{array}{c}
n \\
k%
\end{array}%
\right] _{p,q}:=\frac{[n]_{p,q}!}{[k]_{p,q}![n-k]_{p,q}!}.
\end{equation*}%
Details on $(p,q)$-calculus can be found in \cite{vivek}. For $p=1$, all the
notions of $(p,q)$-calculus are reduced to $q$-calculus \cite{and}.

In 1962, Schurer \cite{shr} introduced and studied the operators $S_{m,\ell }:C[0,\ell+1]\rightarrow C[0,1]$ defined for any $m\in
\mathbb{N}$ and $\ell $ be fixed in $\mathbb{N}$ and any function $f\in
C[0,\ell +1]$ as follows
\begin{equation}\label{12}
S_{m,\ell }(f;x)=\sum_{k=0}^{m+\ell }\left[
\begin{array}{c}
m+\ell \\
k%
\end{array}%
\right]%
x^{k}(1-x)^{m+\ell -k}f\left( \frac{k}{m}\right) ,~~~~~~~x\in [0,1].
\end{equation}

For any $m \in$ and $f \in C[0,\ell+1]$, $~\ell$ is fixed, then $q-$ analogue of Bernstein-Schurer operators in \cite{mura} defined as follows

\begin{equation}\label{13}
\tilde{B}_{m,\ell}(f;q;x)=\sum\limits_{k=0}^{m+\ell}\left[
\begin{array}{c}
m+\ell \\
k%
\end{array}%
\right] _{q}x^{k}\prod\limits_{s=0}^{m+\ell-k-1}(1-q^{s}x)~~f\left( \frac{%
[k]_{q} }{[m]_{q}}\right) ,~~x\in [0,1].
\end{equation}

Our aim is to introduce a $(p,q)$-analogue of these operators. We
investigate the approximation properties of this class and we estimate the
rate of convergence and some theorem by using the modulus of continuity. We
study the approximation properties based on Korovkin's type approximation
theorem and also establish the some direct theorem.

\section{ \textbf{{Construction of $(p,q)$-Bernstein-Schurer operators (Revised)} }}
Mursaleen et. al \cite{mur4} has defined $(p,q)$-analogue of Bernstein operators as:
\begin{equation}\label{bn}
{B}_{n}^{p,q}(f;x)=\sum_{k=0}^{n} \left[
\begin{array}{c}
n \\
k%
\end{array}%
\right] _{p,q}x^k \prod_{s=0}^{n-k-1}(p^s-q^s x)
f\left(\frac{[k]_{p,q}}{[n]_{p,q}}\right),~~~~~~~x \in [0,1].
\end{equation}
But ${B}_{m,\ell}^{p,q}(f;x) \neq 1$, for all $x \in [0,1]$. Hence, They re-introduced the $(p,q)$ Bernstein operators \cite{arxive} as follows:\newline

\begin{equation}\label{noj}
{B}_{m,\ell}^{p,q}(f;x)=\frac{1}{p^{\frac{n(n-1)}{2}}}\sum_{k=0}^{n} \left[
\begin{array}{c}
n \\
k%
\end{array}%
\right] _{p,q}p^{\frac{k(k-1)}{2}}x^k \prod_{s=0}^{n-k-1}(p^s-q^s x)
f\left(\frac{[k]_{p,q}}{p^{k-n}[n]_{p,q}}\right),~~~~~~~x \in [0,1].
\end{equation}

Mursaleen et. al \cite{nt1} introduced the $(p,q)$-analogue of Bernstein Schurer operators as:
\begin{equation}\label{mn1}
{B}_{m,\ell}^{p,q}(f;x)=\sum_{k=0}^{m+\ell} \left[
\begin{array}{c}
m+\ell \\
k%
\end{array}%
\right] _{p,q}x^k \prod_{s=0}^{m+\ell-k-1}(p^s-q^s x)
f\left(\frac{[k]_{p,q}}{[m]_{p,q}}\right),~~~~~~~x \in [0,1].
\end{equation}
But ${B}_{m,\ell}^{p,q}(f;x) \neq 1$, for all $x \in [0,1]$. Hence, we re-define our operators as follows:\newline

 We consider  $0<q<p\leq 1$ and for any $m \in \mathbb{N},~~f \in C[0,\ell+1]$, $\ell$ is fixed, we construct a revised
generalized $(p,q)$-Bernstein Schurer operators:
\begin{equation}\label{nj}
{B}_{m,\ell}^{p,q}(f;x)=\frac{1}{p^{\frac{(m+\ell)(m+\ell-1)}{2}}}\sum_{k=0}^{m+\ell} \left[
\begin{array}{c}
m+\ell \\
k%
\end{array}%
\right] _{p,q}p^{\frac{k(k-1)}{2}}x^k \prod_{s=0}^{m+\ell-k-1}(p^s-q^s x)
f\left(\frac{[k]_{p,q}}{p^{k-m-\ell}[m]_{p,q}}\right),~~~~~~~x \in [0,1].
\end{equation}
Clearly, the operator defined by \eqref{nj} is linear and positive. And if we put
$p=1$ in \eqref{nj}, then $(p,q)$ Shurer operator given by \eqref{nj} turn out the $q$%
- Bernstein Shurer operators \cite{mura}. 

\begin{lemma}\label{bb}
Let ${B}_{m,\ell}^{p,q}(.;.)$ be given by \eqref{nj}, then for any $%
x \in [0,1]$ and $0<q<p\leq 1$ we have the following identities

\begin{enumerate}
\item[$($i$)$] ${B}_{m,\ell}^{p,q}(e_0;x)=1$

\item[$($ii$)$] ${B}_{m,\ell}^{p,q}(e_1;x)=\frac{[m+\ell]_{p,q}%
}{[m]_{p,q}}x$

\item[$($iii$)$] ${B}_{m,\ell}^{p,q}(e_2;x)=\frac{p^{m+\ell-1}[m+\ell]_{p,q}}{[m]_{p,q}^2}x+\frac{%
q[m+\ell]_{p,q}[m+\ell-1]_{p,q}}{[m]_{p,q}^2}x^{2}$

\end{enumerate}

where $e_j(t)=t^j,~~j=0,1,2, \cdots.$
\end{lemma}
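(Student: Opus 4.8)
The plan is to compute the three moments directly from the definition \eqref{nj} by evaluating the operator on the test functions $e_0, e_1, e_2$. The central tool will be the $(p,q)$-binomial expansion stated in the preliminaries, namely
\begin{equation*}
(x+y)_{p,q}^{n}=\sum_{k=0}^{n}p^{\frac{(n-k)(n-k-1)}{2}}q^{\frac{k(k-1)}{2}}\left[\begin{array}{c} n \\ k \end{array}\right]_{p,q}x^{n-k}y^{k},
\end{equation*}
specialized so that the product $\prod_{s=0}^{n-k-1}(p^s-q^s x)$ and the factor $p^{\frac{k(k-1)}{2}}x^k$ combine into a recognizable closed form. I would set $n=m+\ell$ throughout. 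For part (i), taking $f=e_0$ so that $f\equiv 1$, the sum becomes $p^{-\frac{n(n-1)}{2}}\sum_{k=0}^{n}\left[\begin{smallmatrix} n \\ k \end{smallmatrix}\right]_{p,q}p^{\frac{k(k-1)}{2}}x^k\prod_{s=0}^{n-k-1}(p^s-q^s x)$, and the goal is to show this product-sum equals $p^{\frac{n(n-1)}{2}}$, i.e. $(x+(1-x))$-type identity in the $(p,q)$ setting with $y=1-x$; after checking the exponent bookkeeping this collapses to $1$.

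For parts (ii) and (iii) the strategy is to insert the argument $\frac{[k]_{p,q}}{p^{k-n}[m]_{p,q}}$ raised to the first and second power, and then reduce the resulting sums to the $e_0$ identity by shifting the summation index. The key algebraic facts I would use are the recurrence relations for $(p,q)$-integers and binomial coefficients: $[n]_{p,q}=p^{n-1}+q[n-1]_{p,q}$, together with $[k]_{p,q}\left[\begin{smallmatrix} n \\ k \end{smallmatrix}\right]_{p,q}=[n]_{p,q}\left[\begin{smallmatrix} n-1 \\ k-1 \end{smallmatrix}\right]_{p,q}$, which lets me pull a factor of $[n]_{p,q}/[m]_{p,q}$ out front and re-index $k\mapsto k+1$. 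For (ii), one application of this identity reduces the first moment to $\frac{[n]_{p,q}}{[m]_{p,q}}x$ times a sum that is again an $e_0$-type identity of degree $n-1$. For (iii), writing $[k]_{p,q}^2 = [k]_{p,q}\big(p^{k-1}+q[k-1]_{p,q}\big)$ splits the second moment into two pieces; the first yields the linear term with coefficient $\frac{p^{n-1}[n]_{p,q}}{[m]_{p,q}^2}$ and the second, after a double index shift using the binomial identity twice, yields the quadratic term with coefficient $\frac{q[n]_{p,q}[n-1]_{p,q}}{[m]_{p,q}^2}$.

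I expect the main obstacle to be the exponent bookkeeping: tracking the powers of $p$ introduced by the normalizing factor $p^{-\frac{n(n-1)}{2}}$, by $p^{\frac{k(k-1)}{2}}$, and by the denominator $p^{k-n}$ in the argument of $f$, and verifying that after each index shift these powers recombine correctly so that the reduced sum is genuinely the $e_0$-identity at the lower degree. In particular, the $p^{k-1}$ term arising from the recurrence $[k]_{p,q}=p^{k-1}+q[k-1]_{p,q}$ must interact cleanly with the $p^{\frac{k(k-1)}{2}}$ weight, and checking that the leftover power of $p$ produces exactly $p^{n-1}$ in the linear coefficient of (iii) is the delicate point. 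A useful sanity check at the end is to set $p=1$: all three formulas must collapse to the known $q$-Bernstein-Schurer moments from \cite{mura}, which confirms the normalization and exponent arithmetic are consistent.
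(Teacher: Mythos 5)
Your plan is correct and follows essentially the same route as the paper: part (i) via the known normalization identity $\sum_k \left[\begin{smallmatrix} n \\ k \end{smallmatrix}\right]_{p,q}p^{\frac{k(k-1)}{2}}x^k\prod_{s=0}^{n-k-1}(p^s-q^sx)=p^{\frac{n(n-1)}{2}}$, and parts (ii)--(iii) via the recurrence $[k]_{p,q}\left[\begin{smallmatrix} n \\ k \end{smallmatrix}\right]_{p,q}=[n]_{p,q}\left[\begin{smallmatrix} n-1 \\ k-1 \end{smallmatrix}\right]_{p,q}$, the splitting $[k+1]_{p,q}=p^{k}+q[k]_{p,q}$, and index shifts that reduce everything to the $e_0$ identity at lower degree. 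The exponent bookkeeping you flag as the delicate point is indeed where all the work in the paper's computation lies, and it does close up as you predict.
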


\begin{proof}
\begin{enumerate}

\item[$($i$)$] For $0<q<p\leq 1$ we use the known identity from \cite{mur4}
\begin{equation*}
\sum_{k=0}^{n} \left[
\begin{array}{c}
n \\
k%
\end{array}%
\right] _{p,q}p^{\frac{k(k-1)}{2}}x^k
\prod_{s=0}^{n-k-1}(p^s-q^s x)=p^{\frac{(n)(n-1)}{2}},~~~x \in [0,1]
\end{equation*}
Suppose we choose $n=m+\ell.$\newline
Since
\begin{equation*}
(1-x)_{p,q}^{m+\ell-k}=\prod_{s=0}^{m+\ell-k-1}(p^s-q^s x),
\end{equation*}
we get
\begin{equation*}
\sum_{k=0}^{m+\ell} \left[
\begin{array}{c}
m+\ell \\
k%
\end{array}%
\right] _{p,q}p^{\frac{k(k-1)}{2}}x^k
\prod_{s=0}^{m+\ell-k-1}(p^s-q^s x)=p^{\frac{(m+\ell)(m+\ell-1)}{2}}
\end{equation*}
Consequently, which implies ${B}_{m,\ell}^{p,q}(e_0;x)=1$.

\item[$($ii$)$] Clearly we have
 \begin{eqnarray*}
{B}_{m,\ell}^{p,q}(e_1;x) &=&\frac1{p^{\frac{(m+\ell)(m+\ell-1)}2}}\sum\limits_{k=0}^{m+\ell}\left[
\begin{array}{c}
m+\ell\\
k%
\end{array}%
\right] _{p,q}p^{\frac{k(k-1)}2}x^{k}\prod\limits_{s=0}^{m+\ell-k-1}(p^{s}-q^{s}x)~~\frac{%
[k]_{p,q}}{p^{k-m-\ell}[m]_{p,q}}\\
&=&\frac1{p^{\frac{n(n-3)}2}}\frac{[m+\ell]_{p,q}}{[m]_{p,q}}\sum\limits_{k=0}^{m+\ell-1}\left[
\begin{array}{c}
m+\ell-1 \\
k%
\end{array}%
\right] _{p,q}p^{\frac{(k+1)(k-2)}2}x^{k+1}\prod\limits_{s=0}^{m+\ell-k-2}(p^{s}-q^{s}x)\\
&=&\frac x{p^{\frac{(m+\ell-1)(m+\ell-2)}2}}\frac{[m+\ell]_{p,q}}{[m]_{p,q}}\sum\limits_{k=0}^{m+\ell-1}\left[
\begin{array}{c}
m+\ell-1 \\
k%
\end{array}%
\right] _{p,q}p^{\frac{k(k-1)}2}x^{k}\prod\limits_{s=0}^{m+\ell-k-2}(p^{s}-q^{s}x)\\
&=&\frac{[m+\ell]_{p,q}}{[m]_{p,q}}x.
\end{eqnarray*}

\item[$($iii$)$]
${B}_{m,\ell}^{p,q}(e_2;x)$
\begin{eqnarray*}
 &=&\frac1{p^{\frac{(m+\ell)(m+\ell-1)}2}}\sum\limits_{k=0}^{m+\ell}\left[
\begin{array}{c}
m+\ell\\
k%
\end{array}%
\right] _{p,q}p^{\frac{k(k-1)}2}x^{k}\prod\limits_{s=0}^{m+\ell-k-1}(p^{s}-q^{s}x)~~\frac{%
[k]_{p,q}^2}{p^{2k-2m-2\ell}[m]_{p,q}^2}\\
&=&\frac1{p^{\frac{(m+\ell)(m+\ell-5)}2}}\frac{[m+\ell]_{p,q}}{[m]_{p,q}^2}\sum\limits_{k=0}^{m+\ell-1}\left[
\begin{array}{c}
m+\ell-1 \\
k%
\end{array}%
\right] _{p,q}p^{\frac{(k+1)(k-4)}2}x^{k+1}\prod\limits_{s=0}^{m+\ell-k-2}(p^{s}-q^{s}x)~[k+1]_{p,q}\\
&=&\frac1{p^{\frac{(m+\ell)(m+\ell-5)}2}}\frac{[m+\ell]_{p,q}}{[m]_{p,q}^2}\sum\limits_{k=0}^{m+\ell-1}\left[
\begin{array}{c}
m+\ell-1 \\
k%
\end{array}%
\right] _{p,q}p^{\frac{(k+1)(k-4)}2}x^{k+1}\prod\limits_{s=0}^{m+\ell-k-2}(p^{s}-q^{s}x)~(p^k+q[k]_{p,q})\\
&=&\frac1{p^{\frac{(m+\ell)(m+\ell-5)}2}}\frac{[m+\ell]_{p,q}}{[m]_{p,q}^2}\sum\limits_{k=0}^{m+\ell-1}\left[
\begin{array}{c}
m+\ell-1 \\
k%
\end{array}%
\right] _{p,q}p^{\frac{k^2-k-4}2}x^{k+1}\prod\limits_{s=0}^{m+\ell-k-2}(p^{s}-q^{s}x)\\
&&+\frac {q[m+\ell-1]_{p,q}}{p^{\frac{(m+\ell)(m+\ell-5)}2}}\frac{[m+\ell]_{p,q}}{[m]_{p,q}^2}\sum\limits_{k=0}^{m+\ell-2}\left[
\begin{array}{c}
m+\ell-2 \\
k%
\end{array}%
\right] _{p,q}p^{\frac{(k+2)(k-3)}2}x^{k+2}\prod\limits_{s=0}^{m+\ell-k-3}(p^{s}-q^{s}x)\\
&=&\frac {p^{m+\ell-1}x}{p^{\frac{(m+\ell-1)(m+\ell-2)}2}}\frac{[m+\ell]_{p,q}}{[m]_{p,q}^2}\sum\limits_{k=0}^{m+\ell-1}\left[
\begin{array}{c}
m+\ell-1 \\
k%
\end{array}%
\right] _{p,q}p^{\frac{k(k-1)}2}x^{k}\prod\limits_{s=0}^{m+\ell-k-2}(p^{s}-q^{s}x)\\
&&+\frac {q[m+\ell-1]_{p,q}[m+\ell]_{p,q}~x^2}{[m]_{p,q}^2}\frac1{p^{\frac{(m+\ell-2)(m+\ell-3)}2}}\sum\limits_{k=0}^{m+\ell-2}\left[
\begin{array}{c}
m+\ell-2 \\
k%
\end{array}%
\right] _{p,q}p^{\frac{k(k-1)}2}x^{k}\prod\limits_{s=0}^{m+\ell-k-3}(p^{s}-q^{s}x)\\
&=&\frac{p^{m+\ell-1}[m+\ell]_{p,q}}{[m]_{p,q}^2}x+\frac{%
q[m+\ell-1]_{p,q}[m+\ell]_{p,q}}{[m]_{p,q}^2}x^{2}.
\end{eqnarray*}

\end{enumerate}
\end{proof}

\begin{lemma}\label{zx}
Let ${B}_{m,\ell}^{p,q}(.;.)$ be given by lemma \eqref{bb}, then
for any $x \in [0,1]$ and $0<q<p\leq 1$ we have the following identities

\begin{enumerate}
\item[$($i$)$] ${B}_{m,\ell}^{p,q}(e_1-1;x)=\frac{%
[m+\ell]_{p,q}}{[m]_{p,q}}x-1$

\item[$($ii$)$] ${B}_{m,\ell}^{p,q}(e_1-x;x)=\left(\frac{%
[m+\ell]_{p,q}}{[m]_{p,q}}-1\right)x $

\item[$($iii$)$] ${B}_{m,\ell}^{p,q}((e_1-x)^2;x)=\frac{p^{m+\ell-1}[m+\ell]_{p,q}}{[m]_{p,q}^2}x
+\left( 1-2\frac{[m+\ell]_{p,q}}{[m]_{p,q}}+\frac{q%
[m+\ell-1]_{p,q}[m+\ell]_{p,q}}{[m]_{p,q}^2}\right)x^2.$
\end{enumerate}
\end{lemma}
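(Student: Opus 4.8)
The plan is to derive all three identities purely from the linearity of the operator ${B}_{m,\ell}^{p,q}$ together with the moment values already computed in Lemma \ref{bb}; no new summation or $(p,q)$-binomial manipulation is needed. Since the operator is linear in its function argument, each of the expressions $e_1-1$, $e_1-x$, and $(e_1-x)^2$ can be rewritten as a linear combination of the monomials $e_0,e_1,e_2$ and then evaluated term by term. The one conceptual point I would keep straight throughout is that inside ${B}_{m,\ell}^{p,q}(\,\cdot\,;x)$ the symbol $x$ is the fixed evaluation point and behaves as a scalar, whereas $e_1$ denotes the identity function $t\mapsto t$ on which the operator genuinely acts; accordingly $x$ factors out of the operator as a constant.

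For part (i), I would write $e_1-1=e_1-e_0$ and use linearity to obtain ${B}_{m,\ell}^{p,q}(e_1-1;x)={B}_{m,\ell}^{p,q}(e_1;x)-{B}_{m,\ell}^{p,q}(e_0;x)$, at which point Lemma \ref{bb}(i)--(ii) deliver $\frac{[m+\ell]_{p,q}}{[m]_{p,q}}x-1$ at once. For part (ii), I treat $x$ as a scalar and expand $e_1-x=e_1-x\,e_0$, so that ${B}_{m,\ell}^{p,q}(e_1-x;x)={B}_{m,\ell}^{p,q}(e_1;x)-x\,{B}_{m,\ell}^{p,q}(e_0;x)=\left(\frac{[m+\ell]_{p,q}}{[m]_{p,q}}-1\right)x$.

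The only part carrying any genuine bookkeeping is (iii). Here I would expand the square as $(e_1-x)^2=e_2-2x\,e_1+x^2\,e_0$ and apply linearity to get
\begin{equation*}
{B}_{m,\ell}^{p,q}((e_1-x)^2;x)={B}_{m,\ell}^{p,q}(e_2;x)-2x\,{B}_{m,\ell}^{p,q}(e_1;x)+x^2\,{B}_{m,\ell}^{p,q}(e_0;x).
\end{equation*}
Substituting the three values from Lemma \ref{bb} yields a polynomial in $x$ whose degree-one coefficient is $\frac{p^{m+\ell-1}[m+\ell]_{p,q}}{[m]_{p,q}^2}$, arising solely from the first term of ${B}_{m,\ell}^{p,q}(e_2;x)$, while the three degree-two contributions $\frac{q[m+\ell-1]_{p,q}[m+\ell]_{p,q}}{[m]_{p,q}^2}$, $-2\frac{[m+\ell]_{p,q}}{[m]_{p,q}}$, and $+1$ combine into the stated coefficient of $x^2$. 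The main (indeed only) obstacle is the clerical one of collecting the $x^2$ terms correctly and matching them against the asserted form; beyond that, the identity follows immediately and requires nothing past Lemma \ref{bb}.
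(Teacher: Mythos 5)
Your proposal is correct and is exactly the intended argument: the paper states Lemma \ref{zx} without proof, but the decomposition $(e_1-x)^2=e_2-2x\,e_1+x^2e_0$ together with linearity and Lemma \ref{bb} is precisely what the paper itself uses later (e.g.\ in the proof of Theorem \ref{mm}), and your collection of the $x$ and $x^2$ coefficients matches the stated identities.
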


\section{\textbf{{On the convergence of $(p,q)$-Bernstein-Schurer operators }}}

Let $f \in C[0,\gamma] $, and the modulus of continuity of $f$ denoted by $%
\omega(f,\delta)$ gives the maximum oscillation of $f$ in any interval of
length not exceeding $\delta>0$ and it is given by the relation
\begin{equation*}
\omega(f,\delta)=\sup_{\mid y-x \mid \leq \delta} \mid f(y)-f(x) \mid,~~~x,y
\in [0,\gamma].
\end{equation*}
It is known that $\lim_{\delta\to 0+}\omega(f,\delta)=0$ for $f \in C[0,
\gamma]$ and for any $\delta >0$ one has
\begin{equation}\label{31}
\mid f(y)- f(x) \mid \leq \left( \frac{\mid y-x \mid}{\delta}+1\right)\omega(f,\delta).
\end{equation}

For $q \in (0,1)$ and $p \in (q ,1]$ obviously have $\lim_{m \to
\infty}[m]_{p,q}=\frac{1}{p-q}$. In order to reach to the convergence
results of the operator ${B}_{m,\ell}^{p,q}$, we take a sequence $%
q_m \in (0,1)$ and $p_m \in (q_m,1]$ such that $\lim_{m \to \infty}p_m=1$
and $\lim_{m \to \infty}q_m=1$, so we get $\lim_{m \to
\infty}[m]_{p_m,q_m}=\infty.$ 

\begin{theorem} \label{cv}
Let $p=p_m,~~q=q_m$ satisfying $0<q_m<p_m\leq 1$ such that $\lim_{m \to
\infty}p_m=1,~~\lim_{m \to \infty}q_m=1$. Then for each $f \in C[0,\ell+1]$,
\begin{equation}\label{32}
\lim_{m \to \infty}{B}_{m,\ell}^{p_m,q_m}(f;x)=f,
\end{equation}
is uniformly on $[0,1]$.
\end{theorem}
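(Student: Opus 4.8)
The plan is to invoke the Bohman--Korovkin theorem, which reduces the uniform convergence $B_{m,\ell}^{p_m,q_m}(f;\cdot)\to f$ for \emph{every} $f\in C[0,\ell+1]$ to the single requirement that the operators reproduce the three test functions $e_0,e_1,e_2$ in the limit, i.e.\ that $B_{m,\ell}^{p_m,q_m}(e_i;x)\to x^i$ uniformly on $[0,1]$ for $i=0,1,2$. Since the operators in \eqref{nj} are linear and positive, the classical argument then applies: for continuous $f$ and any $\varepsilon>0$ one writes $|f(t)-f(x)|\le \varepsilon + \frac{2\|f\|}{\delta^2}(t-x)^2$ (a variant of \eqref{31}), applies the operator, and uses positivity together with the three limits to bound $|B_{m,\ell}^{p_m,q_m}(f;x)-f(x)|$ uniformly in $x$. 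Thus the whole theorem rests on three elementary scalar limits.

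First I would dispose of $e_0$: Lemma \ref{bb}(i) gives $B_{m,\ell}^{p_m,q_m}(e_0;x)=1$ identically, so that convergence is exact. Next, by Lemma \ref{bb}(ii), $B_{m,\ell}^{p_m,q_m}(e_1;x)=\frac{[m+\ell]_{p_m,q_m}}{[m]_{p_m,q_m}}\,x$, so it suffices to show the scalar $\frac{[m+\ell]_{p_m,q_m}}{[m]_{p_m,q_m}}\to 1$; since $|x|\le 1$, uniformity on $[0,1]$ is then automatic. The clean tool here is the $(p,q)$ addition formula $[m+\ell]_{p,q}=p^{\ell}[m]_{p,q}+q^{m}[\ell]_{p,q}$, which yields $\frac{[m+\ell]_{p,q}}{[m]_{p,q}}=p^{\ell}+q^{m}\frac{[\ell]_{p,q}}{[m]_{p,q}}$. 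Because $p_m^{\ell}\to 1$, $[\ell]_{p_m,q_m}$ stays bounded, $q_m^{m}\in(0,1)$, and $[m]_{p_m,q_m}\to\infty$, the right-hand side tends to $1$.

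For $e_2$ I would analyse the two coefficients in Lemma \ref{bb}(iii) separately. The leading coefficient $\frac{p_m^{m+\ell-1}[m+\ell]_{p_m,q_m}}{[m]_{p_m,q_m}^{2}}$ tends to $0$, since $\frac{[m+\ell]}{[m]}\to 1$ is bounded, $p_m^{m+\ell-1}\le 1$, and the remaining factor $\frac{1}{[m]_{p_m,q_m}}\to 0$; the quadratic coefficient $\frac{q_m[m+\ell]_{p_m,q_m}[m+\ell-1]_{p_m,q_m}}{[m]_{p_m,q_m}^{2}}$ factors as $q_m\cdot\frac{[m+\ell]}{[m]}\cdot\frac{[m+\ell-1]}{[m]}$, where each factor tends to $1$ (using the same addition formula, now in the form $[m+\ell-1]_{p,q}=p^{\ell-1}[m]_{p,q}+q^{m}[\ell-1]_{p,q}$, together with $q_m\to1$). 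Hence $B_{m,\ell}^{p_m,q_m}(e_2;x)\to 0\cdot x+1\cdot x^{2}=x^{2}$, and uniformity on $[0,1]$ is again free because the coefficients are $x$-independent and $x$ is bounded.

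The main obstacle is not any single computation but two points of care. First, the operators map $C[0,\ell+1]$ into $C[0,1]$, so the form of Korovkin's theorem I invoke must be the one adapted to this mismatch of domains (test functions compared on $[0,1]$, source functions living on the larger interval); this is the standard Schurer-type formulation, and one must check that $\ell$ being fixed keeps the relevant quantities, such as $[\ell]_{p_m,q_m}$, bounded. Second, the limits above genuinely rely on the regime introduced just before the statement: the \emph{sequences} $p_m,q_m\to 1$ force $[m]_{p_m,q_m}\to\infty$, in contrast to fixed $p,q<1$ where $[m]_{p,q}$ stays bounded. All the ``$\to 0$'' and ``$\to 1$'' claims hinge on this divergence, so I would state it explicitly and keep careful track of where boundedness of $q_m^{m}$ and of $[\ell]_{p_m,q_m}$ is being used.
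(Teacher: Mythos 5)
Your proposal is correct and follows essentially the same route as the paper: reduce to the Korovkin test functions $e_0,e_1,e_2$ via Lemma \ref{bb} and verify the scalar limits $\frac{[m+\ell]_{p_m,q_m}}{[m]_{p_m,q_m}}\to 1$ and $\frac{[m+\ell]_{p_m,q_m}}{[m]_{p_m,q_m}^2}\to 0$. You in fact supply more detail than the paper does (the addition formula $[m+\ell]_{p,q}=p^{\ell}[m]_{p,q}+q^{m}[\ell]_{p,q}$ and the explicit reliance on $[m]_{p_m,q_m}\to\infty$), where the paper simply asserts these limits "by simple calculation."
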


\begin{proof}
The proof is based on the well known Korovkin theorem regarding the
convergence of a sequence of linear and positive operators, so it is enough
to prove the conditions
\begin{equation*}
{B}_{m,\ell}^{p_m,q_m}((e_j;x)=x^j,~~~j=0,1,2,~~~\{\mbox{as}~
m \to \infty\}
\end{equation*}
uniformly on $[0,1]$.\newline
Clearly we have
\begin{equation*}
\lim_{m \to \infty}{B}_{m,\ell}^{p_m,q_m}(e_0;x)=1.
\end{equation*}
By taking the simple calculation we get
\begin{equation*}
\lim_{m \to \infty} \frac{[m+\ell]_{p_m,q_m}}{[m]_{p_m,q_m}}=1,~~~\mbox{as}%
~~0<q_m<p_m \leq 1.
\end{equation*}
Since $~\mbox{as}~~0<q_m<p_m \leq 1$, then we get,
\begin{equation*}
\lim_{m \to \infty} \frac{[m+\ell]_{p_m,q_m}}{[m]_{p_m,q_m}^2}=0.
\end{equation*}
Hence we have
\begin{equation*}
\lim_{m \to \infty}{B}_{m,\ell}^{p_m,q_m}(e_1;x)=x
\end{equation*}
\begin{equation*}
\lim_{m \to \infty}{B}_{m,\ell}^{p_m,q_m}(e_2;x)=x^2
\end{equation*}
\end{proof}



\begin{theorem}\label{mm}
If $f \in C[0,\ell+1]$, then
\begin{equation*}
\mid {B}_{m,\ell}^{p,q}(f;x)-f(x)\mid \leq 2{\omega}%
_f(\delta_m),
\end{equation*}
where
\begin{equation*}
\delta_m= x \bigg{|} \frac{[m+\ell]_{p,q}}{[m]_{p,q}}-1\bigg{|} + \sqrt{%
\frac{[m+\ell]_{p,q}}{[m]_{p,q}}}.\sqrt{\frac{\left(q[m+\ell-1]_{p,q}-[m+%
\ell]_{p,q}\right)x^2+ p^{(m+\ell-1)}x}{[m]_{p,q}}}.
\end{equation*}
\end{theorem}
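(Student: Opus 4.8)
The plan is to run the classical positive-operator / modulus-of-continuity argument, the only twist being that ${B}_{m,\ell}^{p,q}$ does not reproduce linear functions, so one must center the estimate at the operator mean rather than at $x$. First I would use Lemma~\ref{bb}(i), namely ${B}_{m,\ell}^{p,q}(e_0;x)=1$, to write $f(x)={B}_{m,\ell}^{p,q}(f(x);x)$ and hence, by linearity and positivity,
\[
\bigl|{B}_{m,\ell}^{p,q}(f;x)-f(x)\bigr|
=\bigl|{B}_{m,\ell}^{p,q}\bigl(f(t)-f(x);x\bigr)\bigr|
\le {B}_{m,\ell}^{p,q}\bigl(|f(t)-f(x)|;x\bigr).
\]
Writing $\omega_f(\delta)=\omega(f,\delta)$ and applying the modulus-of-continuity bound \eqref{31} with an as-yet-unspecified $\delta>0$, this gives
\[
\bigl|{B}_{m,\ell}^{p,q}(f;x)-f(x)\bigr|
\le \omega_f(\delta)\left(1+\frac1\delta\,{B}_{m,\ell}^{p,q}\bigl(|t-x|;x\bigr)\right),
\]
so everything reduces to bounding the first absolute moment ${B}_{m,\ell}^{p,q}(|t-x|;x)$ by $\delta_m$; choosing $\delta=\delta_m$ will then produce the factor $2$.

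The key step is the estimate of ${B}_{m,\ell}^{p,q}(|t-x|;x)$, and the two-term shape of $\delta_m$ signals how to proceed: I would split the deviation around the operator mean $\mu:={B}_{m,\ell}^{p,q}(e_1;x)=\frac{[m+\ell]_{p,q}}{[m]_{p,q}}x$ (Lemma~\ref{bb}(ii)) rather than around $x$, via $|t-x|\le |t-\mu|+|\mu-x|$. Since $|\mu-x|$ is constant in $t$, positivity together with ${B}_{m,\ell}^{p,q}(e_0;x)=1$ yields
\[
{B}_{m,\ell}^{p,q}\bigl(|t-x|;x\bigr)\le {B}_{m,\ell}^{p,q}\bigl(|t-\mu|;x\bigr)+|\mu-x|,
\]
and by Lemma~\ref{zx}(ii) the second summand is precisely $x\bigl|\frac{[m+\ell]_{p,q}}{[m]_{p,q}}-1\bigr|$, the first term of $\delta_m$. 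For the first summand I would invoke the Cauchy--Schwarz inequality for positive linear operators (again using ${B}_{m,\ell}^{p,q}(e_0;x)=1$), namely ${B}_{m,\ell}^{p,q}(|t-\mu|;x)\le\sqrt{{B}_{m,\ell}^{p,q}\bigl((t-\mu)^2;x\bigr)}$, reducing matters to the variance ${B}_{m,\ell}^{p,q}(e_2;x)-\mu^2$.

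The main computational obstacle is verifying that this variance equals the radicand in the second term of $\delta_m$. Expanding and using Lemma~\ref{bb},
\[
{B}_{m,\ell}^{p,q}\bigl((t-\mu)^2;x\bigr)
={B}_{m,\ell}^{p,q}(e_2;x)-\frac{[m+\ell]_{p,q}^2}{[m]_{p,q}^2}x^2
=\frac{[m+\ell]_{p,q}}{[m]_{p,q}}\cdot\frac{\bigl(q[m+\ell-1]_{p,q}-[m+\ell]_{p,q}\bigr)x^2+p^{m+\ell-1}x}{[m]_{p,q}},
\]
which is exactly the square of the second term of $\delta_m$; here the $(p,q)$-recurrence $[m+\ell]_{p,q}=p^{m+\ell-1}+q[m+\ell-1]_{p,q}$ collapses the bracket to $-p^{m+\ell-1}$ and shows the radicand equals $p^{m+\ell-1}x(1-x)\ge 0$, confirming that the square root is legitimate on $[0,1]$. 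Combining the two estimates gives ${B}_{m,\ell}^{p,q}(|t-x|;x)\le\delta_m$, and taking $\delta=\delta_m$ in the displayed bound yields $\bigl|{B}_{m,\ell}^{p,q}(f;x)-f(x)\bigr|\le 2\,\omega_f(\delta_m)$, as claimed.
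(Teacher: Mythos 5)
Your proof is correct and follows essentially the same route as the paper: the modulus-of-continuity inequality \eqref{31}, Cauchy--Schwarz against ${B}_{m,\ell}^{p,q}(e_0;x)=1$, and the moment formulas of Lemma~\ref{bb}, with the two terms of $\delta_m$ arising from the bias--variance split of the second moment. The only (cosmetic) difference is that you apply the triangle inequality $|t-x|\le|t-\mu|+|\mu-x|$ before Cauchy--Schwarz, whereas the paper bounds ${B}_{m,\ell}^{p,q}(|t-x|;x)$ by $\sqrt{{B}_{m,\ell}^{p,q}((e_1-x)^2;x)}$ first and then uses $\sqrt{a^2+b^2}\le|a|+|b|$; both yield the same $\delta_m$, and your observation that the radicand equals $p^{m+\ell-1}x(1-x)\cdot[m+\ell]_{p,q}/[m]_{p,q}^2\ge 0$ is a worthwhile check the paper omits.
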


\begin{proof}

$\mid{B}_{m,\ell}^{p,q}(f;x)-f(x)\mid$
\begin{eqnarray*}
 & \leq & \frac{1}{p^{(m+\ell)(m+\ell-1)}}
\sum_{k=0}^{m+\ell} \left[
\begin{array}{c}
m+\ell \\
k%
\end{array}%
\right] _{p,q} p^{\frac{k(k-1)}{2}}x^k
\prod_{s=0}^{m+\ell-k-1}(p^s-q^s x) \bigg{|} f \left(\frac{%
[k]_{p,q}}{p^{k-m-\ell}[m]_{p,q}}\right)-f(x)\bigg{|} \\
& \leq & \frac{1}{p^{(m+\ell)(m+\ell-1)}} \sum_{k=0}^{m+\ell} \left[
\begin{array}{c}
m+\ell \\
k%
\end{array}%
\right] _{p,q}p^{\frac{k(k-1)}{2}}x^k \prod_{s=0}^{m+\ell-k-1}(p^s-q^s x) \left(\frac{%
\bigg{|} \frac{[k]_{p,q}}{p^{k-m-\ell}[m]_{p,q}} -x\bigg{|}}{\delta}+1\right)\omega(f,\delta).
\end{eqnarray*}
By using the Cauchy inequality and lemma \eqref{bb} we have \newline
$\mid{B}_{m,\ell}^{p,q}(f;x)-f(x)\mid$
\begin{eqnarray*}
 & \leq & \left( 1+\frac{1}{\delta}\left\{\frac{1}{p^{(m+\ell)(m+\ell-1)}}\sum_{k=0}^{m+\ell} \left[
\begin{array}{c}
m+\ell \\
k%
\end{array}%
\right] _{p,q}p^{\frac{k(k-1)}{2}}x^k \left(\frac{[k]_{p,q}}{[m]_{p,q}} -x
\right)^2 \prod_{s=0}^{m+\ell-k-1}(p^s-q^s x)\right\}^{\frac{1}{2}%
}\right)\\
& \times & \left({B}_{m,\ell}^{p,q}(e_0;x)\right)^{\frac{1}{2}}\omega(f,\delta) \\
&=&  \left\{ \frac{1}{\delta}\left( {B}%
_{m,\ell}^{p,q}(e_2;x)-2x{B}_{m,\ell}^{p,q}(e_1;x) +x^2{%
B}_{m,\ell}^{p,q}(e_0;x)\right)^{\frac{1}{2}}+1 \right\}\omega(f,\delta)
\end{eqnarray*}
\begin{equation*}
=  \left\{ \frac{1}{\delta}\left( \frac{[m+\ell]_{p,q}
p^{m+\ell-1}}{[m]_{p,q}^2}x+ \left(\frac{%
[m+\ell]_{p,q}[m+\ell-1]_{p,q}}{[m]_{p,q}^2} q -2 \frac{[m+\ell]_{p,q}}{%
[m]_{p,q}}+1\right)x^2\right)^{\frac{1}{2}}+1 \right\}\omega(f,\delta)
\end{equation*}
$=  \left\{\frac{1}{\delta}\left(\left( x \left(\frac{%
[m+\ell]_{p,q}}{[m]_{p,q}}-1\right) \right)^2+ \left(\sqrt{\frac{%
[m+\ell]_{p,q}}{[m]_{p,q}}}.\sqrt{\frac{\left(q[m+\ell-1]_{p,q}-[m+%
\ell]_{p,q}\right)x^2+ p^{(m+\ell-1)}x}{[m]_{p,q}}}%
\right)^2\right)^{\frac{1}{2}}+1\right\}\omega(f,\delta)$ \newline
$\leq \left\{\frac{1}{\delta}\left( x \bigg{|} \frac{%
[m+\ell]_{p,q}}{[m]_{p,q}}-1\bigg{|} + \sqrt{\frac{[m+\ell]_{p,q}}{[m]_{p,q}}%
}.\sqrt{\frac{\left(q[m+\ell-1]_{p,q}-[m+\ell]_{p,q}\right)x^2+
p^{(m+\ell-1)}x}{[m]_{p,q}}}\right)+1\right\}\omega(f,\delta)$.%
\newline
\{by using $(a^2+b^2)^{\frac{1}{2}} \leq (\mid a \mid + \mid b \mid)$\}.%
\newline
Hence we obtain the desired result by choosing $\delta=\delta_m$.
\end{proof}

\section{ \textbf{{Direct Theorems on $(p,q)$-Bernstein-Schurer operators } }%
}

The Peetre's $K$-functional is defined by
\begin{equation*}
K_{2}(f,\delta )=\inf \left\{ \left( \parallel f-g\parallel
+\delta \parallel g^{\prime \prime }\parallel \right) :g\in \mathcal{W}%
^{2}\right\} ,
\end{equation*}%
where
\begin{equation*}
\mathcal{W}^{2}=\left\{ g\in C[0,\ell +1]:g^{\prime },g^{\prime \prime }\in
C[0,\ell +1]\right\} .
\end{equation*}%
Then there exits a positive constant $\mathcal{C}>0$ such that $K%
_{2}(f,\delta )\leq \mathcal{C}\omega _{2}(f,\delta ^{\frac{1}{2}}),~~\delta
>0$, where the second order modulus of continuity is given by
\begin{equation*}
\omega _{2}(f,\delta ^{\frac{1}{2}})=\sup_{0<h<\delta ^{\frac{1}{2}%
}}\sup_{x\in \lbrack 0,\ell +1]}\mid f(x+2h)-2f(x+h)+f(x)\mid .
\end{equation*}%

\begin{theorem}\label{cvvv}
Let $f \in C[0,\ell+1],~~g^{\prime }\in C[0,\ell+1]$ and satisfying $0<q<p
\leq 1$. Then for all $n \in \mathbb{N}$ there exits a constant $\mathcal{C}%
>0$ such that
\begin{equation*}
\biggl{|} {B}_{m,\ell}^{p,q}(f;x)-f(x)- x g^{\prime }(x)\left(
\frac{[m+\ell]_{p,q}}{[m]_{p,q}}-1\right) \biggl{|} \leq \mathcal{C}%
\omega_2 (f, \delta_m(x)),
\end{equation*}
where
\begin{equation*}
\delta_m^2(x)=\frac{[m+\ell]_{p,q}}{[m]_{p,q}^2} p^{(m+\ell-1)}x
\end{equation*}
\begin{equation*}
+ \left( \left( \frac{[m+\ell]_{p,q}}{[m]_{p,q}}-1\right)^2 +\frac{%
[m+\ell]_{p,q}}{[m]_{p,q}^2}\left( q[m+\ell-1]_{p,q}-[m+\ell]_{p,q}\right)
\right)x^2
\end{equation*}
\end{theorem}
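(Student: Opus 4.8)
The plan is to run the standard Peetre $K$-functional argument, exploiting that $B_{m,\ell}^{p,q}$ is linear and positive with $B_{m,\ell}^{p,q}(e_0;x)=1$ (Lemma~\ref{bb}(i)), so that $\|B_{m,\ell}^{p,q}(h;\cdot)\|\le\|h\|$ for every $h\in C[0,\ell+1]$ by positivity. First I would fix an arbitrary $g\in\mathcal{W}^2$ and invoke Taylor's formula with integral remainder,
\begin{equation*}
g(t)=g(x)+(t-x)g'(x)+\int_x^t(t-u)g''(u)\,du.
\end{equation*}
Applying the operator in the variable $t$ and using linearity together with $B_{m,\ell}^{p,q}(e_0;x)=1$ and the first central moment $B_{m,\ell}^{p,q}(e_1-x;x)=\left(\frac{[m+\ell]_{p,q}}{[m]_{p,q}}-1\right)x$ from Lemma~\ref{zx}(ii), the constant and linear contributions reproduce exactly the quantity subtracted on the left-hand side of the theorem, leaving
\begin{equation*}
B_{m,\ell}^{p,q}(g;x)-g(x)-xg'(x)\left(\tfrac{[m+\ell]_{p,q}}{[m]_{p,q}}-1\right)=B_{m,\ell}^{p,q}\!\left(\int_x^t(t-u)g''(u)\,du;\,x\right).
\end{equation*}

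Next I would estimate this remainder. Since $\left|\int_x^t(t-u)g''(u)\,du\right|\le\tfrac12\|g''\|(t-x)^2$, positivity and $B_{m,\ell}^{p,q}(e_0;x)=1$ yield $\left|B_{m,\ell}^{p,q}\!\left(\int_x^t(t-u)g''\,du;\,x\right)\right|\le\tfrac12\|g''\|\,B_{m,\ell}^{p,q}((e_1-x)^2;x)$. The crucial observation is that the second central moment computed in Lemma~\ref{zx}(iii) coincides \emph{exactly} with the quantity $\delta_m^2(x)$ displayed in the statement: expanding $\left(\frac{[m+\ell]_{p,q}}{[m]_{p,q}}-1\right)^2$ and cancelling the $[m+\ell]_{p,q}^2/[m]_{p,q}^2$ terms shows that the two $x^2$-coefficients agree, while the $x$-coefficients match verbatim. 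Hence the smooth estimate becomes $\bigl|B_{m,\ell}^{p,q}(g;x)-g(x)-xg'(x)(\cdots)\bigr|\le\tfrac12\delta_m^2(x)\|g''\|$.

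To pass to a general $f\in C[0,\ell+1]$, I would decompose
\begin{align*}
&B_{m,\ell}^{p,q}(f;x)-f(x)-xg'(x)\left(\tfrac{[m+\ell]_{p,q}}{[m]_{p,q}}-1\right)\\
&\quad=\left[B_{m,\ell}^{p,q}(f-g;x)-(f-g)(x)\right]+\left[B_{m,\ell}^{p,q}(g;x)-g(x)-xg'(x)\left(\tfrac{[m+\ell]_{p,q}}{[m]_{p,q}}-1\right)\right],
\end{align*}
bound the first bracket by $2\|f-g\|$ using $\|B_{m,\ell}^{p,q}(h;\cdot)\|\le\|h\|$, and bound the second by the smooth estimate just obtained. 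This gives a total bound $2\|f-g\|+\tfrac12\delta_m^2(x)\|g''\|\le 2\bigl(\|f-g\|+\delta_m^2(x)\|g''\|\bigr)$. Taking the infimum over $g\in\mathcal{W}^2$ produces $2K_2(f,\delta_m^2(x))$, and the stated inequality $K_2(f,\delta)\le\mathcal{C}\omega_2(f,\delta^{1/2})$ applied with $\delta=\delta_m^2(x)$ converts this into $\mathcal{C}\,\omega_2(f,\delta_m(x))$, which is the desired conclusion.

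The main obstacle I anticipate is the algebraic identification in the second paragraph: one must verify that $B_{m,\ell}^{p,q}((e_1-x)^2;x)$ from Lemma~\ref{zx}(iii) is genuinely the displayed $\delta_m^2(x)$, since the whole estimate hinges on the remainder's second central moment being precisely the argument fed into $\omega_2$. The only conceptual subtlety is recognising that the correction term $xg'(x)\left(\frac{[m+\ell]_{p,q}}{[m]_{p,q}}-1\right)$ is exactly what absorbs the failure of $B_{m,\ell}^{p,q}$ to reproduce linear functions; once the constant and first-order Taylor terms are cancelled against the zeroth and first moments, everything downstream is routine.
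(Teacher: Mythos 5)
Your proposal is correct and follows essentially the same route as the paper: Taylor expansion with integral remainder, cancellation of the zeroth and first moments via Lemma \ref{zx}, bounding the remainder by $\|g''\|\,B_{m,\ell}^{p,q}((e_1-x)^2;x)$, the split $f=(f-g)+g$ with $\|B_{m,\ell}^{p,q}(h;\cdot)\|\le\|h\|$, and finally the infimum over $g\in\mathcal{W}^2$ together with $K_2(f,\delta)\le\mathcal{C}\omega_2(f,\delta^{1/2})$. Your write-up is in fact slightly more careful than the paper's (the factor $\tfrac12$ in the remainder bound, the factor $2$ in $\|f-g\|$, and the explicit check that the second central moment equals $\delta_m^2(x)$), but these differences are absorbed into the constant $\mathcal{C}$ and do not constitute a different argument.
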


\begin{proof}

Let $g \in \mathcal{W}^2$, then from the Taylor's expansion, we get
\begin{equation*}
g(t) = g(x)+ g^{\prime }(x)(t-x)+\int_{x}^t(t-u)g^{\prime \prime }(u)
\mathrm{d}u,~~~t \in [0,\mathcal{A}],~~\mathcal{A} >0.
\end{equation*}
Now by lemma \eqref{zx}, we have
\begin{equation*}
{B}_{m,\ell}^{p,q}(g;x) = g(x)+xg^{\prime }(x)\left( \frac{%
[m+\ell]_{p,q}}{[m]_{p,q}}-1\right)+ {B}_{m,\ell}^{p,q}\left(%
\int_{x}^t(e_1-u)g^{\prime \prime }(u) \mathrm{d}u;p,q;x\right)
\end{equation*}
\begin{eqnarray*}
\biggl{|}{B}_{m,\ell}^{p,q}(g;x)-g(x)-xg^{\prime }(x)\left(
\frac{[m+\ell]_{p,q}}{[m]_{p,q}}-1\right)\biggl{|} & \leq & {B%
}_{m,\ell}^{p,q}\left( \biggl{|} \int_{x}^t \mid (e_1-u) \mid ~ \mid g^{\prime
\prime }(u) \mid \mathrm{d}u;p,q;x \biggl{|} \right) \\
& \leq & {B}_{m,\ell}^{p,q}\left((e_1-x)^2;p,q;x \right) \parallel
g^{\prime \prime }\parallel
\end{eqnarray*}
Hence we get\newline
\newline
$\bigg{|}{B}_{m,\ell}^{p,q}(g;x)-g(x)-xg^{\prime }(x)\left(
\frac{[m+\ell]_{p,q}}{[m]_{p,q}}-1\right)\bigg{|}$\newline
$\leq \parallel g^{\prime \prime }\parallel \left((\frac{[m+\ell]_{p,q}}{%
[m]_{p,q}^2} p^{(m+\ell-1)}x +\left( \left(\frac{%
[m+\ell]_{p,q}}{[m]_{p,q}}-1 \right)^2 +\frac{[m+\ell]_{p,q}}{[m]_{p,q}^2}
\left( q[m+\ell-1]_{p,q}-[m+\ell]_{p,q}\right)\right)x^2 ~ \right)$.\newline

On the other hand we have\newline

$\biggl{|} {B}_{m,\ell}^{p,q}(f;x)-f(x)- x g^{\prime
}(x)\left( \frac{[m+\ell]_{p,q}}{[m]_{p,q}}-1\right)\biggl{|} ~\leq~ \mid
{B}_{m,\ell}^{p,q}\left((f-g);x\right)-(f-g)(x)\mid $
\begin{equation*}
+\biggl{|} {B}_{m,\ell}^{p,q}(g;x)-g(x)- x g^{\prime
}(x)\left( \frac{[m+\ell]_{p,q}}{[m]_{p,q}}-1\right) \biggl{|}.
\end{equation*}

Since we know the relation
\begin{equation*}
\mid {B}_{m,\ell}^{p,q}(f;x)\mid \leq \parallel f \parallel.
\end{equation*}
Therefore\newline
$\biggl{|} {B}_{m,\ell}^{p,q}(f;x)-f(x)- x g^{\prime
}(x)\left( \frac{[m+\ell]_{p,q}}{[m]_{p,q}}-1\right) \biggl{|} ~\leq~
\parallel f-g \parallel$\newline
$+\parallel g^{\prime \prime }\parallel \left((\frac{[m+\ell]_{p,q}}{%
[m]_{p,q}^2} p^{(m+\ell-1)}x +\left( \left(\frac{%
[m+\ell]_{p,q}}{[m]_{p,q}}-1 \right)^2 +\frac{[m+\ell]_{p,q}}{[m]_{p,q}^2}
\left( q[m+\ell-1]_{p,q}-[m+\ell]_{p,q}\right)\right)x^2 ~ \right),$\newline

Now taking the infimum on the right hand side over all $g \in \mathcal{W}%
^2$, we get
\begin{equation*}
\biggl{|} {B}_{m,\ell}^{p,q}(f;x)-f(x)- x g^{\prime }(x)\left(
\frac{[m+\ell]_{p,q}}{[m]_{p,q}}-1\right) \biggl{|} ~\leq~ \mathcal{C}
K_2 \left( f, \delta_m^2(x)\right).
\end{equation*}
In the view of the property of $K$-functional, we get
\begin{equation*}
\biggl{|} {B}_{m,\ell}^{p,q}(f;x)-f(x)- x g^{\prime }(x)\left(
\frac{[m+\ell]_{p,q}}{[m]_{p,q}}-1\right) \biggl{|} ~\leq~ \mathcal{C}
\omega_2 \left( f, \delta_m(x)\right).
\end{equation*}
This completes the proof.
\end{proof}

\begin{theorem}\label{hh}
Let $f \in C[0,\ell+1]$ be such that $f^{\prime },f^{\prime \prime }\in C[0,
\ell+1]$, and the sequence $\{p_m\}$, $\{q_m\}$ satisfying $0<q_m<p_m \leq 1$
such that $p_m \to 1,~~q_m \to 1$ and $p_m^{m}\to \alpha , ~~q_m^m \to \beta$
as $m \to \infty$, where $0\leq \alpha, \beta <1$. Then
\begin{equation*}
\lim_{m \to \infty}[m]_{p_m,q_m}\left({B}_{m,%
\ell}^{p_m,q_m}(f;x)-f(x)\right)=\frac{x(\lambda-\alpha x)}{2}f^{\prime \prime
}(x),
\end{equation*}
is uniformly on $[0,\ell+1]$, where $0< \lambda \leq 1.$
\end{theorem}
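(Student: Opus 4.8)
The plan is to run the classical Voronovskaja scheme, reducing everything to the moment identities already available in Lemma~\ref{bb} and Lemma~\ref{zx}. Fix $x\in[0,\ell+1]$. Since $f',f''\in C[0,\ell+1]$, Taylor's formula with a Peano remainder gives
\begin{equation*}
f(t)=f(x)+(t-x)f'(x)+\tfrac12(t-x)^2 f''(x)+(t-x)^2\,\eta(t,x),
\end{equation*}
where $\eta(\cdot,x)$ is bounded on $[0,\ell+1]$ and $\eta(t,x)\to0$ as $t\to x$. Applying the linear positive operator ${B}_{m,\ell}^{p_m,q_m}$, using ${B}_{m,\ell}^{p_m,q_m}(e_0;x)=1$, and multiplying by $[m]_{p_m,q_m}$, I obtain
\begin{equation*}
[m]_{p_m,q_m}\big({B}_{m,\ell}^{p_m,q_m}(f;x)-f(x)\big)=T_1+T_2+T_3 ,
\end{equation*}
where $T_1=f'(x)\,[m]_{p_m,q_m}{B}_{m,\ell}^{p_m,q_m}(e_1-x;x)$, $T_2=\tfrac12 f''(x)\,[m]_{p_m,q_m}{B}_{m,\ell}^{p_m,q_m}((e_1-x)^2;x)$, and $T_3=[m]_{p_m,q_m}{B}_{m,\ell}^{p_m,q_m}((e_1-x)^2\eta(\cdot,x);x)$. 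The theorem then amounts to finding $\lim T_1$, $\lim T_2$, and showing $\lim T_3=0$.

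For $T_1$ and $T_2$ I substitute Lemma~\ref{zx}(ii),(iii), so that $T_1=f'(x)\,x\big([m+\ell]_{p_m,q_m}-[m]_{p_m,q_m}\big)$ and $T_2$ equals $\tfrac12 f''(x)$ times a degree-two polynomial in $x$ whose coefficients are assembled from $p_m^{m+\ell-1}\,[m+\ell]_{p_m,q_m}/[m]_{p_m,q_m}$ and from $[m]_{p_m,q_m}-2[m+\ell]_{p_m,q_m}+q_m[m+\ell-1]_{p_m,q_m}[m+\ell]_{p_m,q_m}/[m]_{p_m,q_m}$. The core analytic step is to pass to the limit under $p_m\to1$, $q_m\to1$, $p_m^m\to\alpha$, $q_m^m\to\beta$, using $p_m^{m+\ell-1}\to\alpha$ and $[m+\ell]_{p_m,q_m}/[m]_{p_m,q_m}\to1$ (as in Theorem~\ref{cv}); the limiting coefficients, expressed through $\alpha$ and an auxiliary constant $\lambda\in(0,1]$, then combine to give the principal contribution $\tfrac12\,x(\lambda-\alpha x)f''(x)$. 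Because the operator does not reproduce $e_1$ exactly, the bias $T_1$ is a genuine first-order contribution, so a crucial part of this step is to verify that $x\big([m+\ell]_{p_m,q_m}-[m]_{p_m,q_m}\big)\to0$ under the hypotheses (this is exactly where the interplay between a fixed $\ell$ and $p_m,q_m\to1$ is used); only then does the $f''$ term supply the entire limit.

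The main obstacle is the remainder $T_3$. I would estimate it by Cauchy--Schwarz in the form
\begin{equation*}
|T_3|\le[m]_{p_m,q_m}\Big\{{B}_{m,\ell}^{p_m,q_m}\big(\eta(\cdot,x)^2;x\big)\Big\}^{1/2}\Big\{{B}_{m,\ell}^{p_m,q_m}\big((e_1-x)^4;x\big)\Big\}^{1/2}.
\end{equation*}
By Theorem~\ref{cv} together with the continuity and boundedness of $\eta(\cdot,x)$, the first factor tends to $\eta(x,x)=0$. The decisive point is that the product of $[m]_{p_m,q_m}$ with the square root of the fourth central moment must stay bounded, which forces the estimate ${B}_{m,\ell}^{p_m,q_m}((e_1-x)^4;x)=O\big([m]_{p_m,q_m}^{-2}\big)$. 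Since only $e_0,e_1,e_2$ are recorded in Lemma~\ref{bb}, the bulk of the work is to compute ${B}_{m,\ell}^{p_m,q_m}(e_3;x)$ and ${B}_{m,\ell}^{p_m,q_m}(e_4;x)$ by the same index-shift manipulation used there, assemble the fourth central moment, and exhibit its $[m]_{p_m,q_m}^{-2}$ decay. I expect this fourth-moment estimate to be the hardest and most error-prone part of the argument.

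Finally, for the uniformity on $[0,\ell+1]$ I would note that all the moment limits are limits of polynomials in $x$ with coefficients independent of $x$, and that $\eta(\cdot,x)$ may be taken equicontinuous in $x$, so that ${B}_{m,\ell}^{p_m,q_m}(\eta(\cdot,x)^2;x)\to0$ uniformly. Consequently every estimate above is uniform in $x$, and the asymptotic relation holds uniformly on $[0,\ell+1]$, which completes the proof.
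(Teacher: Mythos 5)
Your outline follows exactly the route the paper itself takes: Taylor's formula with a Peano remainder, substitution of the first and second moments from Lemma~\ref{zx}, and a Cauchy--Schwarz estimate for the remainder. The problem is that the two steps you yourself single out as decisive are left undone, and they are precisely the steps on which the theorem hinges. The fourth central moment bound ${B}_{m,\ell}^{p_m,q_m}\big((e_1-x)^4;x\big)=O\big([m]_{p_m,q_m}^{-2}\big)$ is announced but never derived; without it the factor $[m]_{p_m,q_m}$ multiplying $T_3$ cannot be absorbed and $T_3\to0$ does not follow. (For what it is worth, the paper dodges this even more bluntly: after Cauchy--Schwarz it simply declares ${B}_{m,\ell}^{p_m,q_m}\big(r(t,x)(t-x)^2;x\big)=0$ from ${B}_{m,\ell}^{p_m,q_m}\big(r^2(\cdot,x);x\big)\to r^2(x,x)=0$, never reinstating the factor $[m]_{p_m,q_m}$ or controlling the fourth moment. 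Your sketch is more honest about what is missing, but it does not supply it.)

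The second gap is the first-order term. You rightly note that the operator does not reproduce $e_1$, so one must show $T_1=f'(x)\,x\big([m+\ell]_{p_m,q_m}-[m]_{p_m,q_m}\big)\to0$, but you only flag this as ``crucial'' without proving it. In fact
\begin{equation*}
[m+\ell]_{p,q}-[m]_{p,q}=(p^{\ell}-1)\,[m]_{p,q}+q^{m}[\ell]_{p,q},
\end{equation*}
and under the stated hypotheses $q_m^{m}[\ell]_{p_m,q_m}\to\beta\ell$, which is nonzero unless $\beta=0$, while $(p_m^{\ell}-1)[m]_{p_m,q_m}$ is an indeterminate product of a quantity tending to $0$ with one tending to $\infty$. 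So this limit is not merely unverified --- on its face it is nonzero in general, which would inject an extra $f'(x)$ contribution into the asserted Voronovskaja formula. The paper states the limit is $0$ with no computation, and a correct treatment must either impose conditions forcing this difference to vanish or carry the resulting $f'$ term into the statement. As written, both your proposal and the paper leave the theorem unestablished at exactly these two points.
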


\begin{proof}
From the Taylors formula we have
\begin{equation*}
f(t)= f(x)+f^{\prime }(x)(t-x)+\frac{1}{2} f^{\prime \prime
}(x)(t-x)^2+r(t,x)(e_1-x)^2,
\end{equation*}
where $r(t,x)$ is the remainder term and $\lim_{t \to x}r(t,x)=0$,
therefore we have\newline

$[m]_{p_m,q_m}\left({B}_{m,\ell}^{p_m,q_m}(f;x)-f(x)\right)$%
\newline
$=[m]_{p_m,q_m}\left(f^{\prime }(x){B}_{m,\ell}^{p_m,q_m}%
\left((e_1-x);x\right)+\frac{f^{\prime \prime }(x)}{2}{%
B}_{m,\ell}^{p_m,q_m}\left((e_1-x)^2;x\right) +{B}%
_{m,\ell}^{p_m,q_m}(r(t,x)(t-x)^2;x)\right).$ Now by applying the
Cauchy-Schwartz inequality, we have
\begin{equation*}
{B}_{m,\ell}^{p_m,q_m}\left(r(t,x)(t-x)^2;x)\right) \leq
\sqrt{{B}_{m,\ell}^{p_m,q_m}\left(r^2(t,x);x)\right)}. \sqrt{%
{B}_{m,\ell}^{p_m,q_m}\left((t-x)^4;x)\right)}.
\end{equation*}
Since $r^2(x,x)=0$, and $r^2(t,x) \in C[0,\ell+1]$, then for from the
Theorem \ref{cv} we have
\begin{equation*}
{B}_{m,\ell}^{p_m,q_m}\left(r^2(t,x);x)\right)=r^2(x,x)=0,
\end{equation*}
which imply that
\begin{equation*}
{B}_{m,\ell}^{p_m,q_m}\left(r(t,x)(t-x)^2;x)\right)=0
\end{equation*}
\begin{equation*}
\lim_{m \to \infty}[m]_{p_m,q_m}\left({B}_{m,\ell}^{p_m,q_m}%
\left((e_1-x);x)\right)\right) =x\lim_{m \to
\infty}[m]_{p_m,q_m}\left(\frac{[m+\ell]_{p_m,q_m}}{[m]_{p_m,q_m}}-1
\right)=0
\end{equation*}
$\lim_{m \to \infty}[m]_{p_m,q_m}\left({B}%
_{m,\ell}^{p_m,q_m}\left((e_1-x)^2;x)\right)\right)$
\begin{equation*}
=x\lim_{m \to \infty}[m]_{p_m,q_m} \frac{[m+\ell]_{p_m,q_m}}{[m]_{p_m,q_m}^2}
p_m^{m+\ell-1}
\end{equation*}
\begin{equation*}
+x^2\lim_{m \to \infty}[m]_{p_m,q_m}\left( \left(\frac{[m+\ell]_{p_m,q_m}}{%
[m]_{p_m,q_m}}-1 \right)^2+\frac{[m+\ell]_{p_m,q_m}}{[m]_{p_m,q_m}^2} \left(
q_m[m+\ell-1]_{p_m,q_m}-[m+\ell]_{p_m,q_m}\right)\right)
\end{equation*}
\begin{equation*}
\lim_{m \to \infty}[m]_{p_m,q_m}\left({B}_{m,\ell}^{p_m,q_m}%
\left((e_1-x)^2;x)\right)\right)=\lambda x-\alpha x^2
=x(\lambda-\alpha x),
\end{equation*}
where $\lambda \in (0,1]$ depending on the sequence $\{p_m\}$.\newline
Hence we have
\begin{equation*}
\lim_{m \to \infty}[m]_{p_m,q_m}\left({B}_{m,%
\ell}^{p_m,q_m}(f;x)-f(x)\right)=\frac{x(\lambda-\alpha x)}{2}f^{\prime \prime
}(x).
\end{equation*}
This completes the proof.
\end{proof}

Now we give the rate of convergence of the operators ${B}%
_{m,\ell}^{p,q}(f;x) $ in terms of the elements of the usual Lipschitz class $%
Lip_{M}(\nu )$.

Let $f\in C[0,m+\ell]$, $M>0$ and $0<\nu \leq 1$. We recall that $f
$ belongs to the class $Lip_{M}(\nu )$ if the inequality
\begin{equation*}
\mid f(t)-f(x)\mid \leq M\mid t-x\mid^{\nu }~~~(t,x\in (
0,1])
\end{equation*}%
is satisfied.\newline

\begin{theorem}
Let $0<q<p\leq 1$. Then for each $f\in Lip_{M}(\nu )$ we have
\begin{equation*}
\mid{B}_{m,\ell}^{p,q}(f;x)-f(x)\mid \leq M\delta
_{m}^{\nu }(x)
\end{equation*}
where $\delta _{m}^{2}(x)=\frac{[m+\ell]_{p,q}}{[m]_{p,q}^2}
p^{(m+\ell-1)}x$
\begin{equation*}
+\left( \left(\frac{[m+\ell]_{p,q}}{[m]_{p,q}}-1 \right)^2 +\frac{%
[m+\ell]_{p,q}}{[m]_{p,q}^2} \left(
q[m+\ell-1]_{p,q}-[m+\ell]_{p,q}\right)\right)x^2.
\end{equation*}
\end{theorem}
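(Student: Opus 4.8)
The plan is to exploit the positivity of $B_{m,\ell}^{p,q}$ together with the normalization $B_{m,\ell}^{p,q}(e_0;x)=1$ from Lemma \ref{bb}(i), which lets me rewrite the error as the operator acting on the increment $f(t)-f(x)$. First I would write
\begin{equation*}
B_{m,\ell}^{p,q}(f;x)-f(x)=B_{m,\ell}^{p,q}\big(f(t)-f(x);x\big),
\end{equation*}
and then, since the operator is linear and positive, pass the modulus inside to obtain
\begin{equation*}
\big|B_{m,\ell}^{p,q}(f;x)-f(x)\big|\le B_{m,\ell}^{p,q}\big(|f(t)-f(x)|;x\big).
\end{equation*}

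Next I would invoke the hypothesis $f\in Lip_{M}(\nu)$, replacing $|f(t)-f(x)|$ by $M\,|t-x|^{\nu}$, so that the right-hand side is bounded by $M\,B_{m,\ell}^{p,q}(|e_1-x|^{\nu};x)$. The key step is to control this fractional moment. I would apply H\"older's inequality for positive linear operators with the conjugate exponents $r=2/\nu$ and $r'=2/(2-\nu)$ (note that $0<\nu\le1$ guarantees $r\ge2$ and $1/r+1/r'=1$), writing
\begin{equation*}
B_{m,\ell}^{p,q}\big(|e_1-x|^{\nu}\cdot 1;x\big)
\le\Big(B_{m,\ell}^{p,q}\big((e_1-x)^2;x\big)\Big)^{\nu/2}
\Big(B_{m,\ell}^{p,q}(e_0;x)\Big)^{(2-\nu)/2}.
\end{equation*}
Because $B_{m,\ell}^{p,q}(e_0;x)=1$, the second factor equals $1$, leaving only the quadratic central moment raised to the power $\nu/2$.

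Finally I would substitute the value of the second central moment from Lemma \ref{zx}(iii); a short algebraic check confirms that this expression coincides with $\delta_m^2(x)$ as displayed in the statement, after regrouping the $x^2$-coefficient as $\big(\tfrac{[m+\ell]_{p,q}}{[m]_{p,q}}-1\big)^2$ plus the remaining $(p,q)$-term. Hence $B_{m,\ell}^{p,q}(|e_1-x|^{\nu};x)\le(\delta_m^2(x))^{\nu/2}=\delta_m^{\nu}(x)$, and multiplying by $M$ yields the claimed bound. The only genuinely delicate point is the H\"older step: one must check that the exponents are conjugate and that the unit factor $B_{m,\ell}^{p,q}(e_0;x)=1$ correctly absorbs the complementary term, after which everything reduces to the already-computed second moment from Lemma \ref{zx}(iii).
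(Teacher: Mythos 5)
Your proposal is correct and follows essentially the same route as the paper: bound the error by $B_{m,\ell}^{p,q}(|f(t)-f(x)|;x)$, invoke the Lipschitz condition, apply H\"older with exponents $2/\nu$ and $2/(2-\nu)$ (the paper does this at the level of the explicit sum over $k$, you do it in operator form, which is the same computation), and identify the resulting second central moment with $\delta_m^2(x)$ via Lemma \ref{zx}(iii). The algebraic regrouping of the $x^2$-coefficient that you mention does check out, so there is no gap.
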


\begin{proof}
By the monotonicity of the operators ${B}_{m,\ell}^{p,q}(f;x)$%
, we can write\newline
$\mid {B}_{m,\ell}^{p,q}(f;x)-f(x)\mid $
\begin{eqnarray*}
&\leq &{B%
}_{m,\ell}^{p,q}\left( \mid f(t)-f(x)\mid;p,q;x\right) \\
&\leq & \frac{1}{p^{(m+\ell)(m+\ell-1)}}\sum_{k=0}^{m+\ell} \left[
\begin{array}{c}
m+\ell \\
k%
\end{array}%
\right] _{p,q} p^{\frac{k(k-1)}{2}}x^k
\prod_{s=0}^{m+\ell-k-1}(p^s-q^s x) \biggl{|} f \left( \frac{%
[k]_{p,q}}{p^{k-m-\ell}[m]_{p,q}}\right)-f(x)\biggl{|} \\
&\leq & M \frac{1}{p^{(m+\ell)(m+\ell-1)}}\sum_{k=0}^{m+\ell} \left[
\begin{array}{c}
m+\ell \\
k%
\end{array}%
\right] _{p,q} p^{\frac{k(k-1)}{2}}x^k \prod_{s=0}^{m+\ell-k-1}(p^s-q^s x) \biggl{|}
\frac{[k]_{p,q}}{p^{k-m-\ell}[m]_{p,q}}-x\biggl{|}^{\nu} \\
&=& M\sum_{k=0}^{m+\ell} \left(\frac{1}{p^{(m+\ell)(m+\ell-1)}} \mathcal{P}%
_{m,\ell,k}(x) \left(\frac{[k]_{p,q}}{p^{k-m-\ell}[m]_{p,q}}-x
\right)^2\right)^{\frac{\nu}{2}} \left(\frac{1}{p^{(m+\ell)(m+\ell-1)}}\mathcal{P}_{m,\ell,k}(x)\right)^{\frac{2-\nu}{2}%
},
\end{eqnarray*}
where $\mathcal{P}_{m,\ell,k}(x)=\left[
\begin{array}{c}
m+\ell \\
k%
\end{array}%
\right] _{p,q} p^{\frac{k(k-1)}{2}}x^k
\prod_{s=0}^{m+\ell-k-1}(p^s-q^s x)$

Now applying the H\"{o}lder's inequality for the sum with $p=\frac{2}{\nu}$ and $%
q=\frac{2}{2-\nu }$ \newline
$\mid {B}_{m,\ell}^{p,q}(f;x)-f(x)\mid$
\begin{eqnarray*}
 &\leq & M
\left(\frac{1}{p^{(m+\ell)(m+\ell-1)}} \sum_{k=0}^{m+\ell} \mathcal{P}_{m,\ell,k}(x) \left(\frac{%
[k]_{p,q}}{[m]_{p,q}}-x \right)^2\right)^{\frac{\nu}{2}}
\left(\frac{1}{p^{(m+\ell)(m+\ell-1)}}\sum_{k=0}^{m+\ell} \mathcal{P}_{m,\ell,k}(x)\right)^{\frac{%
2-\nu}{2}} \\
&=&M \left( {B}_{m,\ell}^{p,q}\left((e_1-x)^2;x
\right)\right)^{\frac{\nu}{2}}
\end{eqnarray*}
Choosing $\delta :\delta _{m}(x)=\sqrt{{B}%
_{m,\ell}^{p,q}\left((e_1-x)^2;x \right)}$,\newline
we obtain
\begin{equation*}
\mid{B}_{m,\ell}^{p,q}(f;x)-f(x)\mid \leq M\delta
_{m}^{\nu }(x).
\end{equation*}
Hence, the desired result is obtained.
\end{proof}


\end{document}